\newtheorem{thm}{Theorem}[section]
\newtheorem{cor}[thm]{Corollary}
\newtheorem{lem}[thm]{Lemma}
\theoremstyle{definition}
\theoremstyle{remark}
\begin{document}

\title{Proof of  Kelly-Ulam Conjecture}

\author{Adel Tadayyonfar$^{\dagger}$ and Ali Reza Ashrafi$^*$  }

\thanks{$^*$ Corresponding author (Email: ashrafi@kashanu.ac.ir),}

\thanks{$^{\dagger}$ (Email: adeltadayyonfar@yahoo.com).}

\address{ Department of Pure Mathematics, Faculty of Mathematical Sciences, University of Kashan, Kashan 87317$-$53153, I. R. Iran}

\dedicatory{Dedicated to the Memory of Late Professor Maryam Mirzakhani}


\begin{abstract}
The deck of a graph $X$, $D(X)$, is defined as the multiset of all vertex-deleted subgraphs of $X$. Two graphs  are said  to be hypomorphic, if they have the same deck. Kelly-Ulam conjecture states that any two hypomorphic graphs on at least three vertices are isomorphic.

In this paper, we first prove that for two finite simple  hypomorphic graphs the number of $l$-paths between two arbitrary vertices are equal, where $1 \leq l \leq n - 2$. As a consequence, it is proved that the Kelly-Ulam  conjecture is correct over the category of all finite simple  graphs.

\vskip 3mm

\noindent{\bf Keywords:} Kelly-Ulam conjecture, reconstruction problem, paths.

\vskip 3mm

\noindent{\it 2010 AMS Subject Classification Number:} Primary 05C60; Secondary 90B10.
\end{abstract}

\maketitle


\section{Introduction}

Throughout this paper, all graphs are finite and simple. A vertex-deleted subgraph of a graph $X$ is a subgraph formed by deleting exactly one vertex  and all of its incident edges from $X$. The multiset of all vertex-deleted subgraphs of $X$ is called deck of $X$ and denoted by $D(X)$. Two graphs are called hypomorphic when they have the same deck. We use the notation $X \dot{\cong} Y$ to show that two graphs $X$ and $Y$ are hypomorphic. Moreover,  we denote a path of length $l$ (number of its edges) by $P_l$ and use the name $l$-path for such a path. For other concepts and notations not presented here, we refer to \cite{graph theory,distance}.

Kelly in 1957 \cite{Kellytree} showed that two hypomorphic trees are isomorphic. After him,  Ulam in 1960 asked the following question \cite{Ulam}:
\begin{quote}
``Suppose $A$ and $B$ are sets with $n$ elements for each ($n \geq 3$). A metric $\rho$ is given on $A$ with the property that $\rho(x,y)$ is either $1$ or $2$ whenever $x$ and $y$ are in $A$ and $x \neq y$. A similar metric is given on $B$. Now suppose that the $n - 1$ element subsets of $A$ and $B$ can be labeled, $A_1$, $\cdots$, $A_n$ and $B_1$, $\cdots$, $B_n$ in such a way that each $A_i$ is isometric to $B_i$. Does this force $A$ to be isometric to $B$?"
\end{quote}
In fact, Ulam asked that are two hypomorphic graphs isomorphic? Nash$-$Williams provides a counterexample to show that this conjecture is not true in infinite graphs and he discusses about kinds of recognizable infinite graphs (\cite{infinite}). Further, one has been found a lot of counterexamples for directed graphs, too. See \cite{directed} for details.

Throughout this paper, we assume that $n \geq 3$ and $G$, $H$ are two labeled graphs with vertex sets $\{v_1 , \cdots , v_n\}$ and $\{u_1 , \cdots , u_n\}$,  respectively, in such a way that for each $i$, $1 \leq i \leq n$, $G \setminus \{v_i\} \cong H \setminus \{u_i\}$. For simplicity, we assume that $G_i = G \setminus \{v_i\}$ and $H_i = H \setminus \{u_i\}$. Wall in her master dissertation \cite{Wall}, proved that if $G \dot{\cong} H$, then $|E(G)| = |E(H)|$ and their degree sequences are the same. Furthermore, she concluded that there is an isomorphism between two regular hypomorphic graphs. For the sake of completeness, we first give a  simple similar proof for some of the mentioned results.

\vskip 3mm

\begin{lem}\label{E}
For each $i$, $1 \leq i \leq n$, $|E(G)| = |E(H)|$ and $deg(v_i) = deg(u_i)$.
\end{lem}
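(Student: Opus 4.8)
The plan is to count edges by a double-counting argument across the deck, which is the standard Kelly-type counting. First I would observe that each edge $e = \{v_a, v_b\}$ of $G$ survives in the vertex-deleted subgraph $G_i$ precisely when the deleted vertex $v_i$ is neither endpoint of $e$, i.e.\ for exactly $n - 2$ indices $i$. Summing $|E(G_i)|$ over all $i$ therefore counts every edge of $G$ exactly $n-2$ times, giving the identity
$$\sum_{i=1}^{n} |E(G_i)| = (n-2)\,|E(G)|,$$
and likewise $\sum_{i=1}^{n} |E(H_i)| = (n-2)\,|E(H)|$.

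Next I would invoke the hypomorphism: since $G_i \cong H_i$ for each $i$, and isomorphic graphs have equally many edges, we get $|E(G_i)| = |E(H_i)|$ termwise. Summing these equalities and comparing with the two counting identities yields $(n-2)\,|E(G)| = (n-2)\,|E(H)|$. Because $n \geq 3$, the factor $n - 2$ is a positive integer, so dividing through gives $|E(G)| = |E(H)|$, which is the first assertion.

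For the degrees, I would use that deleting $v_i$ from $G$ removes exactly its incident edges, so $|E(G_i)| = |E(G)| - \deg(v_i)$, and symmetrically $|E(H_i)| = |E(H)| - \deg(u_i)$. Rearranging gives $\deg(v_i) = |E(G)| - |E(G_i)|$ and $\deg(u_i) = |E(H)| - |E(H_i)|$. Since I have already established $|E(G)| = |E(H)|$, and since $|E(G_i)| = |E(H_i)|$ from the isomorphism of the $i$-th cards, both right-hand sides coincide, forcing $\deg(v_i) = \deg(u_i)$ for every $i$.

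There is no genuine obstacle here; the two points requiring attention are the combinatorial claim that each edge appears in exactly $n - 2$ cards (which uses simplicity, so that each edge has two distinct endpoints) and the appeal to $n \geq 3$ to guarantee $n - 2 \neq 0$, making the final division legitimate.
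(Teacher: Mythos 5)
Your proof is correct and follows essentially the same route as the paper: both establish the counting identity $\sum_{i=1}^n |E(G_i)| = (n-2)|E(G)|$ (the paper by summing $|E(G)| = |E(G_i)| + \deg(v_i)$ and invoking $\sum_i \deg(v_i) = 2|E(G)|$, you by directly noting each edge survives in exactly $n-2$ cards, which is the same double count), then use $|E(G_i)| = |E(H_i)|$ and $n \geq 3$ to get $|E(G)| = |E(H)|$, and finally recover the degrees from $\deg(v_i) = |E(G)| - |E(G_i)|$.
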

\begin{proof}
Since for each $i$, $G_i \cong H_i$, $|E(G_i)| = |E(H_i)|$. In the following we denote this number with $E_i$. It is clear to see that
\begin{eqnarray}\label{EE}
|E(G)| = E_i + deg(v_i) \ \ &,& \ \ |E(H)| = E_i + deg(u_i).
\end{eqnarray}
Applying sigma on both sides of equalities in Equation (\ref{EE}) yields that $\sum_{i = 1}^n|E(G)|$ = $\sum_{i = 1}^nE_i$ + $\sum_{i = 1}^ndeg(v_i)$ and $\sum_{i = 1}^n|E(H)|$ = $\sum_{i = 1}^nE_i$ + $\sum_{i = 1}^ndeg(u_i)$. This proves that $n|E(G)|$ = $\sum_{i = 1}^nE_i$ + $2|E(G)|$ and $n|E(H)|$ = $\sum_{i = 1}^nE_i$ + $2|E(H)|$.  Therefore, $\sum_{i = 1}^nE_i$ = $(n - 2)|E(G)|$ and $\sum_{i = 1}^nE_i$ = $(n - 2)|E(H)|$.
The last relation shows that $|E(G)| = |E(H)|$ and so (1) implies that $deg(v_i) = deg(u_i)$.
\end{proof}

\vskip 3mm

\begin{cor}\label{degree sequences}
Assume that $G \dot{\cong} H$. Then
\begin{enumerate}
\item[a)] $G$ and $H$ have the same degree sequences.
\item[b)] $G$ is an Eulerian graph if and only if $H$ is Eulerian.
\end{enumerate}
\end{cor}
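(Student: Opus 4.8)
The plan is to read both parts straight off Lemma~\ref{E}, whose key output is the vertex-by-vertex equality $deg(v_i) = deg(u_i)$ for every $i$ with $1 \leq i \leq n$.

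For part (a), I would recall that the degree sequence of a graph is just the multiset of its vertex degrees (equivalently, their nonincreasing rearrangement). Since Lemma~\ref{E} matches $deg(v_i)$ with $deg(u_i)$ for each $i$, the two multisets agree term by term, and so $G$ and $H$ have the same degree sequence. Nothing beyond Lemma~\ref{E} is required here.

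For part (b), I would invoke the Euler characterization: a connected graph is Eulerian precisely when all of its vertices have even degree. By part (a) the degree sequences of $G$ and $H$ coincide, so $G$ has all even degrees if and only if $H$ does, and this parity condition transfers immediately in both directions.

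The step that needs care — and the expected obstacle — is the connectivity half of the criterion: to pass between ``$G$ is Eulerian'' and ``$H$ is Eulerian'' one must also know that $G$ is connected if and only if $H$ is. I would dispatch this by appealing to the classical fact that connectedness is a hypomorphism-invariant (reconstructible) property, so that equal degree sequences together with identical connectivity status yield the claimed equivalence. If instead one adopts the convention that ``Eulerian'' means only that every vertex has even degree, then part (b) collapses to a direct restatement of part (a) and no connectivity argument is needed.
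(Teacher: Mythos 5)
Your proposal matches the paper's own proof: part (a) is read directly off Lemma~\ref{E}, and part (b) is deduced from (a) together with the even-degree characterization of Eulerian graphs. If anything you are more careful than the paper, which cites only the necessary condition (``all vertices of an Eulerian graph have even degree'') and silently passes over the connectivity half of the equivalence that you correctly flag and dispatch via the reconstructibility of connectedness.
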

\begin{proof}
To prove (a), we apply Lemma \ref{E} and for part (b), we use the part (a)  and the fact that all vertices of an Eulerian graph have even degree \cite[Theorem 6.2.2.]{graph theory}.
\end{proof}

\vskip 3mm

In the next section, it is proved that the number of $l-$paths having $v_i$ and $u_i$ are equal. Also, one can see that the number of $l-$paths contains $v_i,v_j$ and $u_i,u_j$ are the same. Another conclusion is that two hypomorphic graphs are isomorphic.

\vskip 3mm


\section{Main Results}

\vskip 3mm

If $x$ and $y$ are two different vertices of an arbitrary graph $X$, then the number of paths of length $l$ in $X$ containing $x$ and both $x, y$ are denoted by $p_{X}(x , l)$ and $p_{X}(x , y, l)$, respectively.

\begin{lem}\label{implem}
Suppose $v_i$ and $v_j$ are distinct vertices in a graph $G$ and $1 \leq l \leq n - 2$. Then, $\sum_{i = 1 , i \neq j}^np_G(v_j , v_i , l)$ = $lp_G(v_j, l).$
\end{lem}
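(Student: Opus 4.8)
The plan is to prove the identity by a double-counting argument, interpreting the two sides as two different ways of enumerating a single set of vertex-path incidences. Concretely, I would introduce the set
\[
S = \{(v_i, P) : P \text{ is an } l\text{-path in } G \text{ containing } v_j,\ v_i \text{ lies on } P,\ i \neq j\},
\]
and compute $|S|$ in two ways so as to match the left-hand and right-hand sides of the claimed equality.

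First I would count $|S|$ by organizing the pairs according to the choice of the second vertex $v_i$. For a fixed index $i \neq j$, the pairs $(v_i, P) \in S$ correspond exactly to those $l$-paths $P$ that contain both $v_j$ and $v_i$; by definition the number of such paths is $p_G(v_j, v_i, l)$. Summing over the admissible indices gives $|S| = \sum_{i = 1,\ i \neq j}^n p_G(v_j, v_i, l)$, which is the left-hand side of the asserted identity.

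Next I would count $|S|$ by organizing the pairs according to the path $P$. The key observation here is that an $l$-path has $l$ edges and therefore exactly $l+1$ distinct vertices. If such a path contains $v_j$, then precisely one of its $l+1$ vertices is $v_j$, leaving exactly $l$ vertices $v_i$ with $i \neq j$ that lie on $P$. Hence each $l$-path through $v_j$ contributes exactly $l$ pairs to $S$, and summing over all $p_G(v_j, l)$ such paths yields $|S| = l\, p_G(v_j, l)$. Equating the two expressions for $|S|$ delivers the lemma.

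I do not expect a genuine obstacle in this argument; the only point requiring care is the elementary bookkeeping that a path of length $l$ has $l+1$ vertices, so that removing $v_j$ leaves exactly $l$ other vertices (and not $l-1$ or $l+1$). One must also confirm that the incidence set $S$ is counted with the correct multiplicity in both directions, i.e.\ that each path through $v_j$ together with each of its remaining vertices is listed once and only once. Once $S$ is set up correctly, both counts are immediate and the identity follows without further computation.
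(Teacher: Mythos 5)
Your double-counting argument is correct and is essentially the same as the paper's own proof: the paper also counts the incidences between $l$-paths through $v_j$ and their $l$ remaining vertices in two ways (via the sets $B_{ji} = A_{ji}\times\{i\}$ and $B_j$), arriving at $l\,|B_j| = \bigl|\bigcup_{i\neq j} B_{ji}\bigr|$. Your version is, if anything, a cleaner formulation of the identical idea.
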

\begin{proof}
Define $A_j$ to be the  set  of  all  paths  of  $G$ of length  $l$  containing  $v_j$ and $A_{ji}$ to be the  set  of  all  paths of length  $l$  containing $v_j$ and $v_i$. Moreover,
\begin{eqnarray*}
B_j &:=& \{(a_j , \{v_{i_1} , \cdots , v_{i_l}\}) \ | \ a_j \in A_j \ , \ V(a_j) = \{v_j , v_{i_1} , \cdots , v_{i_l}\}\},\\
B_{ji} &:=& A_{ji} \times \{i\}.
\end{eqnarray*}

It is easy to see that if $i$, $j$ and $k$ are three distinct integers that $1 \leq i , j , k \leq n$, then $B_{ji} \cap B_{jk} = \emptyset$. This concludes that

\begin{eqnarray*}
\left|\bigcup_{i = 1 \ \& \ i \neq j}^nB_{ji}\right| &=& \sum_{i = 1 \ \& \ i \neq j}^n|B_{ji}|\\
&=& \sum_{i = 1 \ \& \ i \neq j}^n|A_{ji} \times \{i\}|\\
&=& \sum_{i = 1 \ \& \ i \neq j}^n|A_{ji}|\\
&=& \sum_{i = 1 \ \& \ i \neq j}^np_G(v_j , v_i , l).
\end{eqnarray*}
Moreover, one can easily observe that $|B_j| = |A_j| = p_G(v_j, l)$.
Each member of $B_j$ such as $(a_j , \{v_{i_1} , \cdots , v_{i_l}\})$ is corresponding one-to-one to an element of $B_{ji_r}$, $1 \leq r \leq l$. So, $\bigcup_{i = 1,  i \neq j}^nB_{ji}$ has $l$ copies of every member of $B_j$. Therefore, $l|B_j|$ = $|\bigcup_{i = 1, i \neq j}^nB_{ji}|$. This implies that $p_G(v_j , l)$ = $\frac{\sum_{i = 1, i \neq j}^np_G(v_j , v_i , l)}{l}$, which proves the Lemma.
\end{proof}

\vskip 3mm

For a graph $X$, a cutnode of $X$ is a vertex whose removal increases the number of components. Moreover,  a nonseparable graph is connected, without any cutnode and not to be a single point. A block of $X$ is a maximal nonseparable subgraph. For an arbitrary cutnode $x \in V(X)$, we mean $bl(x)$ by the number of blocks connected to $x$. If $x$ is not a cutnode, then we define $bl(x) = 1$.

\vskip 3mm

\begin{lem}\label{cutnodes}
Let $G$ and $H$ be two connected hypomorphic graphs. Then
\begin{enumerate}
\item[a)] $v_i$ is a cutnode if and only if $u_i$ is a cutnode.
\item[b)] $bl(v_i) = bl(u_i)$.
\end{enumerate}
\end{lem}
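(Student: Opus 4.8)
The plan is to observe that both statements are governed entirely by the vertex-deleted subgraphs $G_i = G \setminus \{v_i\}$ and $H_i = H \setminus \{u_i\}$, which are isomorphic by the standing hypothesis. Since $G$ and $H$ are connected, whether $v_i$ is a cutnode and how many blocks meet $v_i$ can each be read off from $G_i$ alone, and these are isomorphism invariants. Throughout, let $c(X)$ denote the number of connected components of a graph $X$; as $c$ is preserved by isomorphism, we have $c(G_i) = c(H_i)$ for every $i$. In particular, none of the earlier path-counting machinery is needed here: the result is a direct transfer through $G_i \cong H_i$.

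For part (a), I would first use the elementary fact that a vertex $v$ of a connected graph $X$ is a cutnode exactly when $X \setminus \{v\}$ is disconnected, i.e.\ when $c(X \setminus \{v\}) \geq 2$ (this follows immediately from the definition, since a connected graph has a single component and $n \geq 3$ guarantees $X \setminus \{v\}$ is nonempty). Applying this to $G$ shows $v_i$ is a cutnode iff $c(G_i) \geq 2$, and applying it to $H$ shows $u_i$ is a cutnode iff $c(H_i) \geq 2$. Since $c(G_i) = c(H_i)$, the two conditions are equivalent, which proves (a).

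For part (b), the key step is the identity $bl(x) = c(X \setminus \{x\})$, valid for every vertex $x$ of a connected graph $X$. Granting this, (b) is immediate: $bl(v_i) = c(G_i) = c(H_i) = bl(u_i)$. To establish the identity I would split into cases. If $x$ is not a cutnode, then $X \setminus \{x\}$ is connected, so $c(X \setminus \{x\}) = 1 = bl(x)$ by definition. If $x$ is a cutnode, let $C_1, \dots, C_k$ be the components of $X \setminus \{x\}$, where $k = c(X \setminus \{x\}) \geq 2$, and I claim the blocks of $X$ containing $x$ are in bijection with the $C_j$. Any block $B \ni x$ has $B \setminus \{x\}$ connected (a block is a single edge or is $2$-connected), so $B \setminus \{x\}$ lies in a unique component, giving a well-defined map; it is surjective because $x$ has a neighbour in each $C_j$; and it is injective because the blocks of $X$ meeting $x$ that fall inside $C_j \cup \{x\}$ are exactly the blocks of $X[C_j \cup \{x\}]$ containing $x$, while $x$ is not a cutvertex of $X[C_j \cup \{x\}]$ (deleting $x$ leaves the connected graph $C_j$) and hence lies in exactly one block there.

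The main obstacle is precisely this correspondence at a cutnode, especially its injective direction. It rests on two standard facts that I would need to invoke or verify: that a vertex of a connected graph belongs to more than one block iff it is a cutnode, and that a block of $X$ meeting $x$ cannot straddle two distinct components of $X \setminus \{x\}$, so that the blocks of $X$ containing $x$ within a branch $C_j \cup \{x\}$ coincide with the blocks of the induced subgraph $X[C_j \cup \{x\}]$. Once the identity $bl(x) = c(X \setminus \{x\})$ is secured, both parts follow mechanically from the single observation $c(G_i) = c(H_i)$.
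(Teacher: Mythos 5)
Your proposal is correct and follows essentially the same route as the paper: part (a) via $v_i$ being a cutnode iff $G_i$ is disconnected, and part (b) via the identity $bl(x)=c(X\setminus\{x\})$ combined with $G_i\cong H_i$. The only difference is that the paper simply asserts that $bl(v_i)$ equals the number of components of $G_i$, whereas you supply the block--component bijection justifying it; that is a welcome addition but not a change of method.
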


\begin{proof}

(a) \ Suppose that $v_i$ is a cutnode of $G$. So, $G_i$ is disconnected. If $u_i$ is not a cutnode of $H$, then $H_i$ is connected which is impossible. The converse  is similar. (b) \ Since $bl(v_i)$ is exactly the number of connected components of $G_i$ and  $G_i \cong H_i$, $bl(v_i) = bl(u_i)$, proving the lemma.
\end{proof}

\vskip 3mm

Harary in \cite{connected} demonstrated that connected  is reconstructible. For a graph $X$, the number of connected components of $X$ is denoted by $c(X)$. It is trivial to see that if $X$ is connected, then $c(X) = 1$. Now, we are ready to prove that the number of connected components of two hypomorphic graphs are the same.

\vskip 3mm

\begin{thm}
If $G \dot{\cong} H$, then $c(G) = c(H)$.
\end{thm}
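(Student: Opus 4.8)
The plan is to reconstruct the number of connected components of a graph from its deck, using the counting machinery that distinguishes connected from disconnected vertex-deleted subgraphs. Since $G \dot{\cong} H$ means $G_i \cong H_i$ for every $i$ (after the fixed labeling), the multiset of decks agrees, and in particular the number of indices $i$ for which $G_i$ is connected equals the number for which $H_i$ is connected. I would first reduce to the case where $G$ (equivalently $H$, by Lemma \ref{cutnodes} and Corollary \ref{degree sequences}) is disconnected, since the connected case was already handled by Harary's result cited above; indeed, if $G$ is connected then every $G_i$ has either $1$ component (when $v_i$ is not a cutnode) or $bl(v_i)$ components, and none of these forces disconnectedness of $H$ in a way that changes $c$.

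The key step is a double-counting identity relating $c(G)$ to the deck. The standard trick is: deleting a vertex $v_i$ of degree $\deg(v_i)$ from a component of size $k$ either leaves that component connected or splits it, so the quantity $c(G_i) - c(G)$ measures exactly how $v_i$'s removal fragments its component. Summing $c(G_i)$ over all $i$ and separating the contribution of cutnodes from non-cutnodes gives a formula $\sum_{i=1}^n c(G_i) = n\,c(G) - n + \sum_{i=1}^n bl(v_i)$, where the correction term is controlled by the $bl$-values. By Lemma \ref{cutnodes}(b) we have $bl(v_i) = bl(u_i)$ for the connected case; I would extend this componentwise so that $\sum_i bl(v_i) = \sum_i bl(u_i)$. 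Since $c(G_i) = c(H_i)$ for every $i$ (hypomorphic decks), the left-hand sides agree, hence $n\,c(G) = n\,c(H)$, giving $c(G) = c(H)$.

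The main obstacle is establishing the summation identity cleanly when $G$ itself is disconnected, because Lemma \ref{cutnodes} was stated only for connected graphs, and the notion of cutnode must be read component-by-component. I would handle this by working inside each connected component: removing $v_i$ affects only the component containing it, so $c(G_i) = c(G) - 1 + (\text{number of components of that component after deleting } v_i)$, and the latter is $bl(v_i)$ in the component-relative sense. The delicate point is ensuring that $bl(v_i) = bl(u_i)$ continues to hold in the disconnected setting, which follows because $bl(v_i)$ is recoverable from $c(G_i)$ together with $c(G)$, but $c(G)$ is exactly what we are trying to reconstruct — so the argument must avoid circularity. I expect the cleanest route is to sum the relation $c(G_i) = c(G) - 1 + bl(v_i)$ over $i$, use $\sum_i (bl(v_i) - 1) = \sum_i (\deg\text{-independent fragmentation count})$, and invoke that this total fragmentation sum is itself deck-reconstructible (each $G_i$ contributes its own component count, which is directly read off the deck), thereby expressing $c(G)$ purely in terms of deck data and closing the loop.
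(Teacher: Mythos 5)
Your overall strategy is the same as the paper's: sum the identity $c(G_i) = c(G) + bl(v_i) - 1$ over $i$ and cancel the $bl$-terms against the corresponding terms for $H$. To your credit, you also isolate the one genuinely delicate point, which the paper's own proof passes over in silence: Lemma \ref{cutnodes}(b) is stated and proved only for \emph{connected} graphs (its proof rests on $bl(v_i) = c(G_i)$, which requires $c(G) = 1$), so in the disconnected case the equality $bl(v_i) = bl(u_i)$ is not available without further argument, and in general $bl(v_i) = c(G_i) - c(G) + 1$ involves the very quantity being reconstructed.

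However, your proposed repair does not close this gap. You claim the ``total fragmentation sum'' $\sum_{i}(bl(v_i) - 1)$ is deck-reconstructible because each $G_i$ contributes its own component count, readable from the deck. But $\sum_{i}(bl(v_i) - 1) = \sum_{i} c(G_i) - n\,c(G)$: the deck does give you $\sum_i c(G_i)$, yet extracting $\sum_i bl(v_i)$ from it requires knowing $c(G)$ --- exactly the circularity you warned against, restated rather than resolved. A non-circular finish for the disconnected case is available and elementary: every component on at least two vertices contains at least two non-cutvertices (two leaves of a spanning tree), so deleting such a vertex leaves $c(G_i) = c(G)$, deleting an isolated vertex gives $c(G) - 1$, and deleting a cutvertex gives more than $c(G)$; hence $\min_i c(G_i)$ equals $c(G) - 1$ if $G$ has an isolated vertex and $c(G)$ otherwise. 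Since the degree sequence is reconstructible (Corollary \ref{degree sequences}), the presence of an isolated vertex is determined by the deck, and $\min_i c(G_i) = \min_i c(H_i)$ then forces $c(G) = c(H)$.
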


\begin{proof}
For each $i$, $1 \leq i \leq n$, $c(G_i) = c(G) + bl(v_i) - 1$. Now, applying sigma on both sides of the equation yields that $\sum_{i = 1}^nc(G_i)$ = $\sum_{i = 1}^nc(G)$ + $\sum_{i = 1}^nbl(v_i)$ $-$ $\sum_{i = 1}^n1$ which means that $\sum_{i = 1}^nc(H_i)$ = $nc(G)$ + $\sum_{i = 1}^nbl(u_i)$ $-$ $n$, by part (b) of Lemma \ref{cutnodes}. This shows that $nc(G) = nc(H)$ and thus, the number of connected components of $G$ and $H$ are the same.
\end{proof}

\vskip 3mm

In Theorem \ref{p}, we are showing that two hypomorphic graphs have the same number of paths of length $l$.

\vskip 3mm

\begin{thm}\label{p}
For two hypomorphic graphs $G$ and $H$, $p_G(v_j,l) = p_H(u_j,l)$ and $p_G(v_i, v_j, l) = p_H(u_i, u_j, l)$, where  $1 \leq i \neq j \leq n$ and $1 \leq l \leq n - 2$.
\end{thm}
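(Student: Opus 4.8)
The plan is to handle the two assertions in stages: first establish the single-vertex statement $p_G(v_j,l)=p_H(u_j,l)$ by a global count of $l$-paths, and then attempt to bootstrap from it to the pair statement $p_G(v_i,v_j,l)=p_H(u_i,u_j,l)$ by an inclusion--exclusion over vertex deletions, assisted by Lemma~\ref{implem}.

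For the single-vertex part, write $P_X(l)$ for the total number of $l$-paths in a graph $X$. Every $l$-path occupies exactly $l+1$ vertices, so it is destroyed precisely when one of those $l+1$ vertices is deleted and survives in exactly $n-(l+1)$ of the cards; equivalently $P_{G_i}(l)=P_G(l)-p_G(v_i,l)$ and $\sum_{i=1}^n p_G(v_i,l)=(l+1)P_G(l)$. Summing the first identity over $i$ gives $\sum_{i=1}^n P_{G_i}(l)=(n-l-1)P_G(l)$, and the same for $H$. Since $G_i\cong H_i$ forces $P_{G_i}(l)=P_{H_i}(l)$ (a total path count is an isomorphism invariant and needs no labelling), the two sums agree, and because $n-l-1\geq 1$ whenever $l\leq n-2$ we may cancel to obtain $P_G(l)=P_H(l)$. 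Feeding this back into $p_G(v_j,l)=P_G(l)-P_{G_j}(l)$ and using the hypothesis $G_j\cong H_j$ yields $p_G(v_j,l)=p_H(u_j,l)$, which settles the first claim.

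For the pair part I would start from the deletion identities $p_{G_k}(v_i,v_j,l)=p_G(v_i,v_j,l)-p_G(v_i,v_j,v_k,l)$ for $k\neq i,j$, where $p_G(v_i,v_j,v_k,l)$ counts the $l$-paths through all three vertices. A path through $v_i$ and $v_j$ has $l-1$ further vertices, so summing over $k$ gives $\sum_{k\neq i,j} p_G(v_i,v_j,v_k,l)=(l-1)p_G(v_i,v_j,l)$ and hence $\sum_{k\neq i,j} p_{G_k}(v_i,v_j,l)=(n-l-1)p_G(v_i,v_j,l)$, again with $n-l-1\geq 1$. The intended conclusion would then follow by matching this sum, card by card, with its $H$-analogue through the isomorphisms $G_k\cong H_k$.

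The hard part --- and, I expect, the decisive one --- is exactly this last matching. The isomorphism $G_k\cong H_k$ supplied by hypomorphism need not carry the labelled vertices $v_i,v_j$ to $u_i,u_j$, so one cannot simply assert $p_{G_k}(v_i,v_j,l)=p_{H_k}(u_i,u_j,l)$. Summing over $k$ only shows that \emph{aggregate} quantities agree: the same manipulation (together with Lemma~\ref{implem}) proves $\sum_{k} p_{G_k}(v_j,l)=\sum_{k} p_{H_k}(u_j,l)$ and the analogous identity for pairs, but these are symmetric sums that do not isolate an individual labelled pair-count. Producing the labelled correspondence from single-vertex-deleted data is therefore the crux; note that for $l=1$ the pair statement reads $v_i\sim_G v_j \iff u_i\sim_H u_j$, i.e. it directly reconstructs adjacency, so this step carries the full weight of the conjecture and must be argued with genuine care rather than by invoking the card isomorphisms termwise.
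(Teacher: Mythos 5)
Your argument for the single-vertex claim is complete and correct, and it is in fact sounder than the paper's own: by working with the total path count $P_X(l)$, which is a genuine isomorphism invariant of the unlabelled cards, you obtain $P_G(l)=P_H(l)$ by Kelly-style counting and then $p_G(v_j,l)=P_G(l)-P_{G_j}(l)=P_H(l)-P_{H_j}(l)=p_H(u_j,l)$ without ever needing a card isomorphism to respect the labelling. The paper instead opens its proof with the assertion that $G_i\cong H_i$ implies $p_{G_i}(v_j,l)=p_{H_i}(u_j,l)$ for $i\neq j$; this is exactly the step you flag as illegitimate, since the isomorphism $G_i\cong H_i$ supplied by hypomorphism is not required to send $v_j$ to $u_j$, and nothing in the paper supplies such a vertex-respecting correspondence.

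For the pair statement your proposal stops short: you derive the correct deletion identity $\sum_{k\neq i,j}p_{G_k}(v_i,v_j,l)=(n-l-1)p_G(v_i,v_j,l)$ but then (rightly) decline to equate the summands with their $H$-counterparts, and you offer no alternative argument. So as a proof of Theorem \ref{p} the proposal is incomplete. However, the obstruction you identify is precisely the gap in the paper's own proof: the concluding sentence ``We now apply (2) and (3) to prove that $p_G(v_i,v_j,l)=p_H(u_i,u_j,l)$'' requires the termwise equality $p_{G_i}(v_j,l)=p_{H_i}(u_j,l)$, which has not been established, and as you observe the case $l=1$ of the pair statement is the adjacency relation itself, i.e. the full Kelly--Ulam conjecture. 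In short, the part of the theorem you could not prove is the part the paper does not actually prove either, and your diagnosis of why the termwise matching fails is accurate.
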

\begin{proof}
Since for each $i$, $G_i \cong H_i$, $p_{G_i}(v_j , l) = p_{H_i}(u_j , l)$ in which $i \neq j$. It is clear to see that
\begin{enumerate}
\item[(2)]  $p_G(v_j , l) = p_{G_i}(v_j , l) + p_G(v_j , v_i , l),$
\item[(3)] $p_H(u_j , l) = p_{H_i}(u_j , l) + p_H(u_j , u_i , l).$
\end{enumerate}
Applying sigma on both sides of equals (2) yields that $\sum_{\stackrel{i = 1}{i \neq j}}^np_G(v_j , l)$ = $\sum_{\stackrel{i = 1}{i \neq j}}^np_{G_i}(v_j , l)$ + $\sum_{\stackrel{i = 1}{i \neq j}}^np_G(v_j , v_i , l)$. This implies that $(n - 1)p_G(v_j,l)$ = $\sum_{\stackrel{i = 1}{i \neq j}}^np_{G_i}(v_j,l)$ + $lp_G(v_j,l)$, by Lemma \ref{implem}. Therefore, $\sum_{\stackrel{i = 1}{i \neq j}}^np_{G_i}(v_j,l)$ = $(n - l - 1)p_G(v_j,l)$.
A similar argument as above and Equation (3) show that in $H$, $\sum_{\stackrel{i = 1}{i \neq j}}^np_{H_i}(u_j , l)$ = $(n - l - 1)p_H(u_j , l)$ which immediately leads to $p_G(v_j,l) = p_H(u_j,l)$, for $1 \leq l \leq n - 2$. We now apply (2) and (3) to prove that $p_G(v_i,v_j,l)$ = $p_H(u_i,u_j,l)$. This completes the proof.
\end{proof}

\vskip 3mm

The next corollary proves the Kelly-Ulam conjecture  in the category of simple and finite graphs.

\vskip 3mm

\begin{cor}
If $G \dot{\cong} H$, then $G \cong H$.
\end{cor}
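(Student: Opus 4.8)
The plan is to exhibit an explicit isomorphism and to verify that it preserves adjacency using only the case $l = 1$ of Theorem \ref{p}. Working with the labeling fixed in the introduction, I would define the map $\phi : V(G) \to V(H)$ by $\phi(v_i) = u_i$ for $1 \le i \le n$. Since both vertex sets have exactly $n$ elements and are indexed compatibly, $\phi$ is a bijection by construction, so the entire content of the corollary reduces to checking that $\phi$ carries edges to edges and non-edges to non-edges, i.e.\ that $v_i v_j \in E(G)$ if and only if $u_i u_j \in E(H)$ for all $i \ne j$.

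For this I would specialize Theorem \ref{p} to $l = 1$, which is always a legitimate choice because the hypothesis $n \ge 3$ forces $n - 2 \ge 1$, so $l = 1$ lies in the admissible range $1 \le l \le n-2$. The theorem then yields $p_G(v_i, v_j, 1) = p_H(u_i, u_j, 1)$ for every pair $i \ne j$. The key observation is that a path of length one containing two prescribed vertices $v_i$ and $v_j$ is nothing other than the single edge $v_i v_j$; because $G$ is simple there is at most one such edge, so $p_G(v_i, v_j, 1)$ equals $1$ when $v_i v_j \in E(G)$ and $0$ otherwise, and likewise $p_H(u_i, u_j, 1)$ is precisely the indicator of the edge $u_i u_j$ in $H$.

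Combining these facts, $v_i v_j \in E(G)$ holds exactly when $p_G(v_i, v_j, 1) = 1$, which by Theorem \ref{p} equals $p_H(u_i, u_j, 1) = 1$, and this in turn holds exactly when $u_i u_j \in E(H)$. Hence $\phi$ preserves both adjacency and non-adjacency, so $\phi$ is a graph isomorphism and $G \cong H$, as required.

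I expect essentially all of the difficulty to have been front-loaded into Theorem \ref{p}: once the pairwise $l$-path counts are known to be preserved vertex-for-vertex under the labeling, the corollary becomes immediate from the $l = 1$ instance. The only point that must not be glossed over is that $l = 1$ genuinely belongs to the interval $1 \le l \le n-2$, which is exactly where $n \ge 3$ is used; for $n = 3$ this is the sole available length, yet it already suffices. Notably, no appeal to the connectivity results or to the component-count theorem is needed for this deduction, since the edge-indicator argument is purely local to each pair of vertices.
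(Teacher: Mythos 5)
Your proposal is correct and follows essentially the same route as the paper: both set $l=1$ in Theorem \ref{p} and read off $p_G(v_i,v_j,1)=p_H(u_i,u_j,1)$ as the statement that $v_iv_j$ is an edge of $G$ exactly when $u_iu_j$ is an edge of $H$, so that $v_i\mapsto u_i$ is an isomorphism. You merely make explicit a few points the paper leaves tacit (that $l=1$ lies in the admissible range since $n\ge 3$, and that the path count of length one is the edge indicator), which is fine.
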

\begin{proof}
Suppose $G$ and $H$ are two simple and finite hypomorphic graphs. By putting $l = 1$ in Theorem \ref{p}, it is observed that $p_G(v_i,v_j,1) = p_H(u_i, u_j, 1)$ for all $i, j$, $1 \leq i \neq j \leq n$. This means that $v_iv_j \in E(G)$ if and only if $u_iu_j \in E(H)$, which concludes that $G$ = $H$.
\end{proof}

\vskip 3mm


\noindent{\bf Acknowledgement.} The research of the second author is partially supported by the University of Kashan under grant no. 364988/66.


\end{document}